\newtheorem{thm}{Theorem}[section]
\newtheorem{prop}[thm]{Proposition}
\newtheorem{lemma}[thm]{Lemma}
\newtheorem*{INTR}{Theorem NZ}
\theoremstyle{definition}
\newtheorem{defn}[thm]{Definition}
\newtheorem{notation}[thm]{Notation}
\newtheorem{remark}[thm]{Remark}
\newtheorem{convention}[thm]{Convention}
\newtheorem*{ack}{Acknowledgment}
\def\F{\mathcal{F}}
\def\en{\mathbb N}
\def\er{\mathbb R}
\def\aa{\mathbb A}
\begin{document}
\author{Martin Rmoutil}
\title{On the nonexistence of a relation between $\sigma$-left-porosity and $\sigma$-right-porosity}
\thanks{The work was supported by the grant GA\v{C}R P201/12/0290.}
\email{mar@rmail.cz}
\address{Charles University, Faculty of Mathematics and Physics, Department of Mathematical\linebreak Analysis, Sokolovsk\'a 83, 186 75 Prague 8, Czech Republic}
\subjclass[2010]{26A99, 28A05}

\keywords{ $\sigma$-porous set; right-porous set; $(f)$-porous set; Foran system; Foran lemma}

\begin{abstract}
Given an arbitrarily weak notion of left-$\langle f \rangle$-porosity and an arbitrarily strong notion of right-$\langle g \rangle$-porosity, we construct an example of closed subset of $\mathbb R$ which is not $\sigma$-left-$\langle f \rangle$-porous and is right-$\langle g \rangle$-porous. We also briefly summarize the relations between three different definitions of porosity controlled by a function; we then observe that our construction gives the example for any combination of these definitions of left-porosity and right-porosity.
\end{abstract}
\maketitle

\section{Introduction}
The topic of this paper originates in the work \cite{3} of R.~J.~Naj\'ares and L.~Zaj\'i\v{c}ek where the following theorem is proved:
\begin{INTR}
There exists a closed set $F\subseteq \er$ which is right-porous and is not $\sigma$-left-porous.
\end{INTR}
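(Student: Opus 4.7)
The plan is to construct $F$ via a Cantor-like scheme on $[0,1]$ that is asymmetric in a precise way, and then to rule out $\sigma$-left-porosity using a Foran-system argument.

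First, I would define a nested sequence of finite unions of closed intervals $F_0 \supseteq F_1 \supseteq \cdots$ with $F := \bigcap_n F_n$. At each stage $n$, every component interval $I$ of $F_n$ is replaced by a finite collection of pairwise disjoint closed subintervals of $I$ arranged asymmetrically: many short pieces accumulate near the left endpoint of $I$, followed by a comparatively large ``right gap'' taking up a fixed positive fraction of $I$. With such a layout, right-porosity of $F$ at every point is almost automatic. Given $x\in F$ and small $r>0$, one selects the generation $n$ at which the component of $F_n$ containing $x$ has diameter comparable to $r$; the construction then furnishes an explicit complementary gap to the right of that component of length $\geq c r$ for a constant $c>0$ depending only on the scheme.

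For the non-$\sigma$-left-porosity the method of choice is Foran's lemma. I would introduce the system $\F$ of all ``canonical pieces'' $F \cap I$, where $I$ is a component interval of some $F_n$. Two ingredients are needed. The \emph{closure property}: for every $E\in\F$ and every open set $U$ with $E\cap U\neq\emptyset$, there is $E'\in\F$ with $\emptyset\neq E'\subseteq E\cap U$, which holds because refining the generation far enough produces component intervals of arbitrarily small diameter lying entirely inside $U$. The \emph{non-left-porosity clause}: no member of $\F$ is left-porous at any of its points, because the cluster of short left-side pieces at each stage should yield, at every $x\in E$, a densely spaced family of subintervals on the left whose complementary gaps are negligible compared with their distance to $x$. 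A standard Baire-category argument then shows that if $F\subseteq\bigcup_n P_n$ with each $P_n$ closed and left-porous, some $P_n$ would have to contain an entire $E\in\F$ (by the closure property), contradicting its non-left-porosity.

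The main obstacle is calibrating the parameters of the Cantor-like construction so that the two porosity constraints coexist. The right gap must always occupy a definite positive fraction of its parent interval in order to guarantee right-porosity; simultaneously, the left-side pieces must be sufficiently numerous and densely packed that no nontrivial left-porosity can be detected at any point of any canonical restriction. Verifying the non-left-porosity claim is the most delicate step: one has to estimate, for every $x\in F$ and all sufficiently small $r>0$, the lengths of the gaps of $F$ lying in $(x-r,x)$, and show that their maximum is $o(r)$. An iterative geometric-series estimate across generations should deliver this once the scheme's parameters are balanced appropriately, and then the two halves combine to give the theorem.
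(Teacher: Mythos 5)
Your approach has a genuine gap, and it is exactly the gap that forces the paper (and Najáres--Zajíček before it) to use a digit-expansion rather than a naive Cantor-like scheme.

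The flaw is in the Foran-system half of the argument. First, a small point of bookkeeping: the Foran condition is \emph{not} ``no member of $\F$ is left-porous at any of its points.'' That statement is impossible for bounded closed sets: every such set is left-porous with porosity $1$ at its infimum. The correct requirement (Definition~\ref{defForan}) is that for each $E\in\F$ and suitable open $U$ there exists $E^*\in\F$, $E^*\subseteq E\cap U$, such that the \emph{parent} $E$ is left-porous at no point of the \emph{child} $E^*$. Since $E^*\subseteq E$, this is weaker than asking $E^*$ itself to be non-left-porous, but it is still a nontrivial requirement at the points of $E^*$.

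The real problem is that your family $\F$ of ``canonical pieces'' $F\cap I$, with $I$ a component interval of some generation, cannot satisfy even this corrected clause. Take $E=F\cap I$ and $E^*=F\cap I'$ with $I'\subset I$ a deeper component interval. Let $z_0=\inf E^*$; it lies in $E^*$ because $F$ and $I'$ are closed. In any Cantor-like construction, immediately to the left of $z_0$ there is a gap of $F$ of some positive length $\gamma$ (either $I'$ is separated from the neighbouring piece of its generation by a gap, or $z_0$ coincides with the left endpoint of the ambient interval $F_0$, outside which there is no $F$ at all). Hence for $h<\gamma$ one has $(z_0-h,z_0)\cap F=\emptyset$, so $\lambda(E,(z_0-h,z_0))=h$, and $E$ is left-porous at $z_0$ with porosity $1$. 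Thus the Foran clause fails at $z_0\in E^*$, for every admissible choice of $I'$. In other words, right-porosity forces $F$ to have large gaps everywhere at all scales, and the right endpoints of those gaps are points of maximal left-porosity; ``refining to a smaller component interval'' always produces one of these bad points as the leftmost point of $E^*$.

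What the paper (and, implicitly, Theorem NZ in \cite{3}) does differently is precisely designed to avoid this. The sets $A(B_1,\dots,B_{N^2},\varepsilon)$ are defined by global conditions on digit frequencies, not as restrictions to intervals, and the refined set $F^*$ in the Foran step is obtained by \emph{strengthening the frequency condition} (fixing more digits and halving $\varepsilon$), not by localising to a subinterval. Consequently $F^*$ is a sparse subset of $F$, and the larger set $F$ remains ``dense to the left'' of every point of $F^*$ at all small scales: for $z\in F^*$ and all large $p$, at least one of $z-x_p$, $z-x_{p+1}$ lies in $F$, which by Remark~\ref{condIV} rules out left-$[f]$-porosity of $F$ at $z$, including at $z=\inf F^*$. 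There is no analogue of this density in your setup, because near $\inf(F\cap I')$ the sets $E$, $E^*$ and $F$ all coincide. To repair your approach you would need a Foran family whose members are genuinely thinner than the set they refine, not merely restrictions of it to subintervals; once you do that, you have essentially reinvented the digit/frequency construction of the paper.
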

We prove the stronger Theorem \ref{hlavniveta} which shows that, as long as we work with a reasonable notion of upper porosity (i.e. porosity defined by $\limsup$ or some equivalent), there is no connection between $\sigma$-left-porosity and right-porosity. The proof is based on the ideas from \cite{3}, but is slightly more technical and contains some new concepts. The main difference lies in our usage of the \emph{multi-expansions} of real numbers (see \ref{defmultiexp}) as opposed to the ordinary decimal expansion used by Naj\'ares and Zaj\'i\v{c}ek. When we have chosen a suitable multi-expansion, however, the rest of the proof follows a scheme identical to that from \cite{3}.

For our method to conveniently go through, we also need to use the right definition of porosity controlled by a function. The one which suits our purpose the best is $[g]$-porosity (see Definition \ref{defhrana}), but that is not a standard notion. For that reason we also give the more common Definitions \ref{defkulata} and \ref{defangle}, together with Proposition \ref{porekviv} which allows us to deduce the validity of our main result in the more standard setting.

Note that porosity controlled by a function can be defined in various ways (see e.g. \cite{5} or \cite{4}) and the notation is not unified. Thus, for example, the notion of $[g]$-porosity we choose to work with is different from that of \cite{5}. However, finding relations between different definitions is usually quite simple and is not our aim in this article.

The proof of Theorem~\ref{hlavniveta} uses the so called Foran Lemma (here Lemma~\ref{ForanLemma}) which is a tool for recognizing non-$\sigma$-porous (in various senses) sets developed by L.~Zaj\'i\v{c}ek from an original idea of J.~Foran. Our proof, however, does not require the most general version, and so we state the definition of Foran system accordingly simplified. The general version for any porosity-like relation in a topologically complete metric space and $G_\delta$-sets (instead of closed sets) can be found in the article \cite{1}.

We shall need Notation~\ref{ZMFci} and Definition~\ref{defhrana} in order to formulate the following definition of Foran System, and the Foran Lemma.

\begin{defn}\label{defForan}
Let $g\in G_3$. We say that $\F\subseteq 2^\er$ is a \emph{Foran system for left-$[g]$-porosity} if the following conditions hold:
\begin{enumerate}[(a)]
\item $\F$ is a nonempty family of nonempty closed sets.
\item For each $F\in\F$ and each open set $G\subseteq\er$ with $F\cap G\neq\emptyset$, there exists a set $F^*\in\F$ such that $F^* \subseteq F\cap G$ and $F$ is left-$[g]$-porous at no point of $F^*$.
\end{enumerate}
\end{defn}

\begin{lemma}\label{ForanLemma}
Let $g\in G_3$ and let let $\F$ be a Foran system for left-$[g]$-porosity. Then no set from $\F$ is $\sigma$-left-$[g]$-porous.
\end{lemma}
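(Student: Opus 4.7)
The plan is a standard Baire-category-style argument by contradiction. Suppose toward contradiction that some $F_0\in\F$ is $\sigma$-left-$[g]$-porous, so that $F_0=\bigcup_{n=1}^\infty A_n$ with each $A_n$ left-$[g]$-porous. Because $[g]$-porosity is an upper (``$\limsup$'') notion, the closure of a left-$[g]$-porous set is again left-$[g]$-porous, so we may replace each $A_n$ by $\overline{A_n}$ and assume the $A_n$ are closed.

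I would then inductively build a sequence of nonempty sets $F_0\supseteq F_1\supseteq F_2\supseteq\cdots$ in $\F$ satisfying $\operatorname{diam}(F_n)<1/n$ and $F_n\cap A_n=\emptyset$. The key subclaim at step $n$ is that $F_{n-1}\not\subseteq A_n$. Indeed, if $F_{n-1}\subseteq A_n$, then since $A_n$ is left-$[g]$-porous at every point of itself, and porosity is monotone under set inclusion (a subset has only larger gaps), $F_{n-1}$ would also be left-$[g]$-porous at every point of $F_{n-1}$; applying Definition~\ref{defForan}(b) with $G=\er$ would yield a \emph{nonempty} $F^*\in\F$ with $F^*\subseteq F_{n-1}$ such that $F_{n-1}$ is left-$[g]$-porous at no point of $F^*$, a contradiction.

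Once we know $F_{n-1}\not\subseteq A_n$, pick $y\in F_{n-1}\setminus A_n$; since $A_n$ is closed, there is an open ball $G$ about $y$ of radius less than $1/(2n)$ with $\overline{G}\cap A_n=\emptyset$. Applying Definition~\ref{defForan}(b) to $F_{n-1}$ and this $G$ produces the desired $F_n\in\F$ with $F_n\subseteq F_{n-1}\cap G$, completing the induction. Finally, choosing $x_n\in F_n$ gives a Cauchy sequence in $\er$ (by nesting and the diameter bound) whose limit $x$ belongs to $\bigcap_n F_n$ by closedness of each $F_n$; in particular $x\in F_0$, so $x\in A_{n_0}$ for some $n_0$, contradicting $F_{n_0}\cap A_{n_0}=\emptyset$.

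The main technical point I expect to need to justify carefully is the subclaim in the second paragraph: that for a left-$[g]$-porous set $A_n$ and any subset $T\subseteq A_n$, the set $T$ is itself left-$[g]$-porous at every point of $T$. This ``monotonicity under inclusion'' of $[g]$-porosity, together with the legitimacy of the closure step, are the two properties of $[g]$-porosity (encoded in the class $G_3$ and Definition~\ref{defhrana}) that the argument genuinely relies on; everything else is the purely set-theoretic skeleton above.
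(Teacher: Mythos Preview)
The paper does not prove this lemma itself; it is quoted from \cite{1}, so there is no in-paper argument to compare against. Your overall Baire-type skeleton is the standard one, and your subclaim $F_{n-1}\not\subseteq A_n$ (via monotonicity of $[g]$-porosity together with condition~(b) applied with $G=\er$) is correctly argued.

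The genuine gap is the closure step: it is \emph{false} in general that the closure of a left-$[g]$-porous set is again left-$[g]$-porous. What does hold is only the pointwise fact that $M$ is left-$[g]$-porous at a given $x$ iff $\overline M$ is (because the witnessing intervals are open); this says nothing about points of $\overline M\setminus M$. Concretely, take $g(t)=t/2\in G_3$ and $M=\{-1/n:n\ge 1\}$. Every point of $M$ is left-isolated in $M$, so $M$ is trivially left-$[g]$-porous; yet for each $\alpha\in(0,\tfrac12]$, writing $\alpha\in(1/(k{+}1),1/k]$ with $k\ge 2$, one has $1/(k{+}1)\in(\alpha/2,\alpha)$, so $(-\alpha,-\alpha/2)\cap M\neq\emptyset$ and $\overline M=M\cup\{0\}$ fails to be left-$[g]$-porous at~$0$. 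Without the closure step your subclaim still yields $F_{n-1}\setminus A_n\neq\emptyset$, but you can no longer separate a point of this difference from $A_n$ by an open set, and the induction breaks down. The usual repair in the general Foran-lemma framework is not to take closures but to enlarge each $A_n$ to the $G_\delta$ left-$[g]$-porous set $Q_n:=\{x\in\overline{A_n}:\overline{A_n}\text{ is left-}[g]\text{-porous at }x\}\supseteq A_n$, whose complement in each $F_k$ is $F_\sigma$, and then to interleave this extra countable decomposition into the nested construction.
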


\section{Preliminaries to the proof}
This section contains the necessary notation and definitions used in the proof of our main theorem, as well as an important lemma which will allow us to choose a suitable multi-expansion.

\begin{notation}\label{decimalexp}
\mbox{}
\begin{itemize}
\item Let $x\in (0,1)$. As usual, we write $x=0,a_1 a_2 \dots$ if $x=\sum_{i=1}^\infty a_i 10^{-i}$ and $a_i \in \{ 0,1,\dots, 9\}$ for each $i\in \en$. In case $x$ has two expansions, we only consider the one which ends with zeros and then we denote $a_i(x):=a_i$.
\item For an $x\in (0,1)$ we put $l(x):=\sup \{ k\in \en; a_k(x)\neq 0 \}$.
\item We denote by $\aa$ the set of all $x\in (0,1)$ with $l(x)<\infty$.
\end{itemize}
\end{notation}

\begin{lemma}\label{lemmaxn}
Let $f\in G_3$. Then there exists a sequence $\{ x_n \} _{n=1}^\infty \subseteq (0,\infty)\cap \aa$ with the following properties:

\begin{enumerate}[(i)]
\item The sequence $\{ x_n \} _{n=1}^\infty$ is decreasing and $\lim_{n\to \infty}x_n=0$.
\item The sequence of natural numbers $\{ l(x_n) \}_{n=1}^\infty$ is increasing.
\item Let $M\subseteq \er$, $x\in \er$ and let the following proposition be true:
\begin{equation}\label{megavyrok}
(\exists n_0\in {\mathbb N})(\forall n>n_0):\; x+x_n\in M \;\vee\; x+x_{n+1}\in M.
\end{equation}
Then $M$ is not right-$[f]$-porous at $x$.
\end{enumerate}
\end{lemma}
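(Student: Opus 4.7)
The plan is to construct $\{x_n\}$ inductively, choosing each $x_{n+1}\in\aa$ just barely below $x_n$ so that the gap $x_n-x_{n+1}$ is controlled by $f(x_{n+1})$; this should make it impossible for any right-$[f]$-porosity witness interval at $x$ to lie strictly between two consecutive $x_n$'s without swallowing both. The key ingredient I extract from Definition~\ref{defhrana} is that right-$[f]$-porosity at $x$ produces, at arbitrarily small scales $R$, an open subinterval $(x+a,x+b)\subseteq(x,x+R)$ disjoint from $M$ with $b-a$ at least a fixed positive multiple $c\cdot f(R)$; under the monotonicity assumptions implicit in $G_3$ one can then take $R$ and $b$ of the same order.

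I fix some small $\kappa>0$ depending on $c$, start with any $x_1\in\aa$, and given $x_n\in\aa$ use the density of finite decimals below $x_n$ to pick $x_{n+1}\in\aa$ satisfying (a)~$0<x_n-x_{n+1}<\kappa\cdot f(x_{n+1})$ and (b)~$l(x_{n+1})>l(x_n)$. Both conditions can be met simultaneously: for $\delta>0$ small, $\kappa f(x_n-\delta)$ stays close to $\kappa f(x_n)>0$, so (a) holds on a nonempty open interval of admissible $x_{n+1}$, which contains finite decimals of any prescribed precision. Properties (i) and (ii) are immediate from the construction, and $x_n\to 0$ follows from the standard size conditions on gauges in $G_3$ (which typically prevent $\sum(x_n-x_{n+1})$ from converging to a value smaller than $x_1$).

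For (iii) I argue by contrapositive: assume $M$ is right-$[f]$-porous at $x$ and fix an arbitrary $n_0$; I must produce $n>n_0$ with $x+x_n,\,x+x_{n+1}\notin M$. Pick a witness scale $R$ so small that the associated interval $(x+a,x+b)\subseteq(x,x+R)$ satisfies $b<x_{n_0}$, and let $N$ be the largest index with $x_N\geq b$, so $N\geq n_0$ and $x_{N+1}<b$. By (a) and the monotonicity of $f$,
\[
 b-x_{N+1}\leq x_N-x_{N+1}<\kappa f(x_{N+1})\leq\kappa f(b),
\]
and iterating once more, $b-x_{N+2}<2\kappa f(b)$. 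With $\kappa$ chosen small enough that $2\kappa f(b)<c\cdot f(R)\leq b-a$, both $x_{N+1}$ and $x_{N+2}$ lie in $(a,b)$; hence $x+x_{N+1}$ and $x+x_{N+2}$ lie in the $M$-free interval $(x+a,x+b)$, contradicting \eqref{megavyrok} at $n=N+1>n_0$. The main obstacle is precisely this calibration step: the exact form of Definition~\ref{defhrana} determines whether the porosity inequality involves $f$ at $R$, at $b$, or at $b-a$, so the constant $\kappa$ must be tuned accordingly, and one needs the structural features of $G_3$ (at minimum, monotonicity together with a controlled ratio $f(R)/f(b)$ when $R$ and $b$ are of the same order) to make the chain of inequalities close. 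Everything else — the decimal density used to enforce $l(x_{n+1})>l(x_n)$, and the translation between "$(a,b)$ contains two consecutive terms" and the negation of \eqref{megavyrok} — is routine.
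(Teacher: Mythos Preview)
Your argument rests on a misreading of Definition~\ref{defhrana}. Right-$[f]$-porosity of $M$ at $x$ does \emph{not} give you a hole of length $c\cdot f(R)$ for some fixed $c>0$; it gives, for a sequence $\alpha_n\searrow 0$, the \emph{specific} interval $(x+f(\alpha_n),x+\alpha_n)$ disjoint from $M$. So in your notation $b=R=\alpha_n$, $a=f(\alpha_n)$, and the length you have to squeeze two consecutive terms into is $b-a=\alpha_n-f(\alpha_n)$, not $c\,f(\alpha_n)$. For $f\in G_3$ nothing prevents $f(t)/t\to 1$ (take $f(t)=t-t^2$), so $\alpha-f(\alpha)$ may be arbitrarily small compared with $f(\alpha)$. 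Your gap condition $x_n-x_{n+1}<\kappa f(x_{n+1})$ is therefore calibrated to the wrong quantity: from it you get $b-x_{N+2}<2\kappa f(b)$, and you need this to be $<b-f(b)$, i.e.\ $(1+2\kappa)f(b)<b$, which fails for all small $b$ once $f(b)/b\to 1$. The constant $c$ you invoke simply does not exist in this definition, so the ``calibration step'' cannot be closed by choosing $\kappa$ small. The paper handles exactly this difficulty by working with the function $g(t)=t-\beta^2(t)$, where $\beta$ is a monotone minorant of $t-f(t)$; the key inequality $g(g(t))>f(t)$ is what guarantees that two steps of the sequence stay inside $(f(\alpha),\alpha)$, and it is this that replaces your nonexistent $c$.

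There is also a separate gap in property~(i): your inductive condition is only an \emph{upper} bound on $x_n-x_{n+1}$, and nothing in $G_3$ forces a lower bound. With your rule one may choose each $x_{n+1}$ within $10^{-n}$ of $x_n$ (and still have $l(x_{n+1})>l(x_n)$), so the sequence can converge to any positive limit; the sentence about ``standard size conditions on gauges in $G_3$'' has no content. In the paper the convergence $x_n\to 0$ is genuinely nontrivial and is proved from the explicit choice $x_{n+1}\in\bigl(g(x_n),\,g(x_n)+\tfrac{x_n-g(x_n)}{n+1}\bigr)$ together with $\beta>0$.
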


\begin{proof}
Let $f\in G_3$ be defined on the interval $[0,\delta)$. For $x\in [0,\delta)$ define $\alpha(x):=x-f(x)$, and for $x\in \left[ 0, \frac{\delta}{2}\right)$
\begin{equation*}
\beta(x):=\min \alpha \Bigl(\Bigl[x,\frac{\delta}{2} \Bigr] \Bigr).
\end{equation*}
Then $\beta$ is a non-decreasing continuous function with $0<\beta\leq \alpha$ on $\left( 0, \frac{\delta}{2}\right)$. Let us now define the function $g:\left[0,\frac{\delta}{2}\right)\to (0,\infty)$ by the formula
\begin{equation}\label{3lprce2}
g(x):=x-\beta^2(x).
\end{equation}
Obviously $\beta(0)=0$, so we can find a $\delta_1\in\left(0,\frac{\delta}{2}\right)$ such that for each $x\in (0,\delta_1)$ we have $\beta(x)<\frac{1}{2}$ and thus
\begin{equation}\label{3lprce3}
\beta^2(x)<\frac{1}{2}\beta(x) \leq \frac{1}{2}\alpha(x).
\end{equation}
Recall that $g\in G_3$, and hence $g(x)<x$ for all $x\in (0,\delta)$. We obtain:
\begin{gather}
g(g(x))\stackrel{\eqref{3lprce2}}{=} g(x)-\beta^2(g(x))\stackrel{\eqref{3lprce2}}{=} x-\beta^2(x)-\beta^2(g(x))\geq  \notag \\
\geq x-2\beta^2(x) \stackrel{\eqref{3lprce3}}{>} x-\alpha(x) = f(x). \label{odhdktvrposl}
\end{gather}

We shall now construct the sequence $\{ x_n \}_{n=1}^\infty\subseteq(0,\infty)\cap {\mathbb A}$ by induction. First, choose an arbitrary $x_1\in (0,\delta_1)\cap{\mathbb A}$. Now assume we have already chosen the number $x_n$ and take some 
\begin{equation}\label{3lprce4}
y\in \Bigl( g(x_n), g(x_n)+\frac{x_n-g(x_n)}{n+1}\Bigr) \cap {\mathbb A}
\end{equation}
such that $l(y)>l(x_n)$. We set $x_{n+1}:=y$.

We now have a sequence $\{ x_n \}_{n=1}^\infty$ satisfying condition \emph{(ii)} and it remains to be shown it also satisfies \emph{(i)} and \emph{(iii)}.

\paragraph{\bf (i):} It is obvious that the sequence $\{ x_n \}_{n=1}^\infty$ is decreasing and positive; hence, it has a non-negative limit $c$. To obtain a contradiction, assume that $c>0$. Set
\begin{equation*}
m:=\beta^2(c) \qquad\text{and}\qquad M:=\beta^2 \Bigl(\frac{c+\frac{\delta}{2}}{2}\Bigr),
\end{equation*}
and find an $n_0\in \en$ such that
\begin{equation*}
x_{n_0} < \min\Bigl\{c+ m-\frac{M}{n_0+1}\,,\;\frac{c+\frac{\delta}{2}}{2} \Bigr\}.
\end{equation*}
The following estimate gives a contradiction:
\begin{gather*}
x_{n_0+1} \stackrel{\eqref{3lprce4}}{<}g(x_{n_0})+\frac{x_{n_0} - g(x_{n_0})}{n_0+1} \stackrel{\eqref{3lprce2}}{=} x_{n_0}-\beta^2(x_{n_0})+\frac{\beta^2(x_{n_0})}{n_0+1}\leq \\
\leq x_{n_0}-\beta^2(x_{n_0})+\frac{M}{n_0+1} \leq x_{n_0}-m+\frac{M}{n_0+1} < c\,.
\end{gather*}

\paragraph{\bf (iii):} Let us have a set $M\subseteq \er$ and a point $x\in\er$ such that \eqref{megavyrok} is true; let us take some $n_0$ from \eqref{megavyrok}. Then we have $x+x_{n_0+1}\in M$ or $x+x_{n_0+2}\in M$. Now choose an arbitrary $y\in(x,x+x_{n_0+2})$ and find the largest $n_1\in\en$ such that $y\leq x+x_{n_1}$. Now:
\begin{equation*}
f(y-x)\stackrel{\eqref{odhdktvrposl}}{<}g(g(y-x)) \leq g(g(x_{n_1})) \stackrel{\eqref{3lprce4}}{<} x_{n_1+2}<x_{n_1+1} < y-x\,.
\end{equation*}
Setting $I:=(x+f(y-x),y)$ we obtain that $x+x_{n_1+1}\in M\cap I$ or $x+x_{n_1+2}\in M\cap I$ and hence, by the definition, $M$ is not right-$[f]$-porous at $x$.
\end{proof}

\begin{remark}\label{condIV}
It is obvious that the sequence $\{ x_n \}_{n=1}^\infty$ with \emph{(i), (ii), (iii)} from the previous lemma also has the following property.
\emph{
\begin{enumerate}
\item[(iv)] Let $M\subseteq \er$, $x\in \er$ and let the following proposition be true:
\begin{equation}\label{Legavyrok}
(\exists n_0\in {\mathbb N})(\forall n>n_0):\;x-x_n\in M \;\vee\; x-x_{n+1}\in M.
\end{equation}
Then $M$ is not left-$[f]$-porous at $x$.
\end{enumerate}
}
\end{remark}

\begin{defn}\label{defmultiexp}
Let $\{ d_n \}_{n=1}^\infty \subseteq \en$ be a sequence. We denote $D_0:=0$ and then for a number $n\in \en$ we set
\begin{equation}\label{znaceniDn}
D_n:=\sum_{k=1}^n d_k.
\end{equation}
For $x\in (0,1)$ we define the \emph{multi-digit expansion of $x$ with respect to $\{ d_n \}_{n=1}^\infty$} as the sequence $\{ b_n(x) \}_{n=1}^\infty$, where for $n\in \en$,
\begin{equation}\label{defmultirozvojbn}
b_n(x):=\sum_{k=1+D_{n-1}}^{D_n} 10^{D_n-k} a_k(x). 
\end{equation}
The sequence $\{ b_n \}_{n=1}^\infty$ of functions defined on $(0,1)$ by \eqref{defmultirozvojbn} is called the \emph{multi-digit expansion with respect to $\{ d_n \}_{n=1}^\infty$}. 
We will need the symbols
\begin{align*}
C(x,n) & :=\# \left\{ k\in\mathbb N; \, n^2<k\leq(n+1)^2, b_k(x)=10^{d_k}-1\right\},\\
E(x,n) & :=\# \left\{ k\in\mathbb N; \, n^2<k\leq(n+1)^2, b_k(x)\neq 10^{d_k}-1\right\}.
\end{align*}
\end{defn}

\begin{remark}
Let us have a multi-digit expansion $\{ b_n \}_{n=1}^\infty$ with respect to some sequence $\{ d_n \}_{n=1}^\infty\subseteq \en$ and a real number $x\in (0,1)$. Then for each $n\in \en$ we have $b_n(x)\in\left\{0,1,\dots,10^{d_n}-1\right\}$. We can also observe that $x\in(0,1)$ can be written in the form
\begin{equation*}
x=\sum_{n=1}^\infty  b_n(x) \cdot 10^{-D_n}.
\end{equation*}
Note that we still respect the convention from the first point of \ref{decimalexp}; hence $b_n(x)\neq 10^{d_n}-1$ for infinitely many $n\in\en$.
\end{remark}

\begin{convention}
In the following we shall often say briefly \emph{multi-expansion} instead of \emph{multi-digit expansion with respect to $\{ d_n \} _{n=1}^\infty$}, as we will only work with a single sequence $\{d_n\}_{n=1}^\infty$. The functions $b_n$ will also be called \emph{multi-digits}.
\end{convention}

\section{Main result}

\begin{thm}\label{hlavniveta}
Let $f,g\in G_3$. Then there exists a closed set $F\subseteq \er$ which is right-$[g]$-porous and is not $\sigma$-left-$[f]$-porous.
\end{thm}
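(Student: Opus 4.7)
The idea is to follow the Foran-system scheme of \cite{3}, replacing the ordinary decimal expansion with a suitably chosen multi-digit expansion from Definition \ref{defmultiexp}, so that the motions witnessing non-left-$[f]$-porosity can be built into the structure of the set. I would start by applying Lemma \ref{lemmaxn} to the function $f$ to produce the sequence $\{x_n\}\subseteq (0,\infty)\cap\aa$ satisfying (i)--(iii), so that by Remark \ref{condIV} the translates $x-x_n$ and $x-x_{n+1}$ serve as criterion (iv) for non-left-$[f]$-porosity. Since each $x_n$ lies in $\aa$ and $l(x_n)$ is strictly increasing, I can choose the block lengths $\{d_n\}$ so that $D_{n-1} < l(x_n) \leq D_n$, whereby $x_n$ affects only the multi-digits $b_1, \ldots, b_n$ when subtracted from a number in $(0,1)$ (modulo possible borrows). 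I would simultaneously make each $d_n$ large enough that a run of $d_n$ maximal decimal digits inside the $n$-th multi-digit block is sufficient to create a $[g]$-porosity interval. The set $F$ is then defined as the closed set of $x\in[0,1]$ whose multi-expansion satisfies $b_n(x)=10^{d_n}-1$ for ``most'' indices $n$ in each range $(n^2,(n+1)^2]$, for instance $E(x,n)\leq 1$ for all $n\geq n_0$ together with a no-carry constraint needed to keep subtractions well-behaved.

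With this choice, right-$[g]$-porosity of $F$ is essentially immediate: for every $x\in F$ there are infinitely many indices $n$ at which $b_n(x)=10^{d_n}-1$, and this forces a large $F$-free interval just above $x$ whose length, compared to the corresponding $r_n$, beats $g(r_n)$ by our size requirements on $d_n$. Closedness is routine from the continuity of the multi-expansion functions off a countable set of dyadic-type exceptions.

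For the non-$\sigma$-left-$[f]$-porosity I would construct a Foran system $\F$ for left-$[f]$-porosity consisting of the nonempty sets $F_w := F \cap \{x : b_k(x)=w_k \text{ for } k\leq N\}$, where $w$ ranges over admissible finite prefixes. Given $F_w\in\F$ and an open set $G$ with $F_w\cap G\neq \emptyset$, I would refine $w$ to a longer admissible prefix $w'$ with $F_{w'}\subseteq F_w\cap G$. The crucial step is then to verify that $F_w$ is not left-$[f]$-porous at any point $y\in F_{w'}$: by Remark \ref{condIV} it suffices to show that for all sufficiently large $n$ either $y-x_n\in F_w$ or $y-x_{n+1}\in F_w$. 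Since $x_n$ and $x_{n+1}$ only touch the first $n+1$ multi-digits, and since $l(x_{n+1}) > l(x_n)$ puts their ``highest'' nonzero digits at different positions, at least one of the two subtractions avoids a borrow that would damage the prefix $w$ or the mostly-maximal structure defining $F$, and so lands in $F_w$. Lemma \ref{ForanLemma} then yields that $F$ itself is not $\sigma$-left-$[f]$-porous.

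The main obstacle is the combinatorial calibration in the previous paragraph: the sequence $\{d_n\}$, together with the sparsity and no-carry conditions used to define $F$, must simultaneously be coarse enough to force $[g]$-porosity and fine enough that for every $y\in F_{w'}$ and every sufficiently large $n$ at least one of $y-x_n$, $y-x_{n+1}$ stays inside $F_w$ without ever triggering a borrow that destroys the maximal-digit structure or alters the fixed prefix. The corresponding step in \cite{3} only has to control a single decimal digit at a time, whereas here one must track an entire multi-digit block at once and coordinate the control across consecutive indices $n$ and $n+1$; this is the source of the additional technicality announced in the introduction.
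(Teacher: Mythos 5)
You have the right skeleton: Lemma~\ref{lemmaxn} and Remark~\ref{condIV} supply the criterion, a multi-expansion is chosen to localize the $x_n$'s, and a Foran system built from cylinder-type conditions delivers non-$\sigma$-left-$[f]$-porosity while the block sizes force right-$[g]$-porosity. But two pieces of the calibration are genuinely missing, and the first one is fatal to your version.

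\textbf{The Foran system needs a second, continuous parameter.} You propose $\F=\{F_w\}$ where each $F_w$ is a trace of a single fixed set $F$ (defined by the hard cap $E(x,n)\leq 1$) on a cylinder, and you refine only the prefix. The Foran condition then forces you to prove, for $y\in F_{w'}$ and all large $p$, that $y-x_p\in F$ or $y-x_{p+1}\in F$. Subtracting $x_p$ can convert maximal multi-digits of $y$ into non-maximal ones inside a single block $(n^2,(n+1)^2]$ (both where $x_p$ has a nonzero multi-digit and where a borrow terminates). If $y$ already has $E(y,n)=1$ there, neither difference can satisfy your $E\leq 1$ bound, and no choice of prefix $w'$ can protect against this, because $F_w$ and $F_{w'}$ impose the \emph{same} cap on all large blocks. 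The paper's Foran system instead consists of all sets $A(B_1,\dots,B_{N^2},\varepsilon)$, i.e.\ a prefix \emph{and} an $\varepsilon$, defined by $1-\varepsilon/n^\alpha\leq C(x,n)/(2n+1)<1$ for $n\geq N$; refining to $F^*$ extends the prefix and \emph{halves} $\varepsilon$. The slack between $\varepsilon/2$ and $\varepsilon$ then absorbs the loss of up to two maximal multi-digits per block, because $2/(2n+1)<\varepsilon/(2n^\alpha)$ for $n\geq M$. Your construction has no analogue of this room to lose.

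\textbf{The localization of $x_n$ is too weak.} Your condition $D_{n-1}<l(x_n)\leq D_n$ only controls the last nonzero decimal digit of $x_n$; it leaves $x_n$ free to have nonzero multi-digits throughout $b_1,\dots,b_n$. Then subtracting $x_p$ can degrade $C(\cdot,m)$ in many blocks $m$ at once, ruining the estimate $C(y-x_p,m)\geq C(y,m)-2$ that the Foran step relies on. The paper's third condition on $d_{n+1}$ (requiring $D_{n+1}$ to dominate $l(x_k)$ for every $k$ with $b_n(x_k)\neq 0$) is what forces each $x_n$ to have at most two \emph{consecutive} nonzero multi-digits; combined with the observation that a borrow propagating through zero blocks only \emph{creates} maximal multi-digits, this yields the per-block bound. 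Separately, your appeal to $l(x_{p+1})>l(x_p)$ is aimed at ``avoiding a borrow,'' but the paper uses it for something else: to guarantee that at least one of $z-x_p$, $z-x_{p+1}$ retains a non-maximal multi-digit in each block (the upper bound $C<2n+1$), while the lower bound comes entirely from the $\varepsilon\mapsto\varepsilon/2$ slack. Finally, right-$[g]$-porosity is not ``essentially immediate'' from a vague largeness of $d_n$; the precise device is the explicit inequality $g(10^{-D_n})>10^{-D_{n+1}}$ built into the induction, which turns any maximal run in a block into a $[g]$-porosity gap, and closedness uses that (A2) forces $C(x,n)\neq 0$ \emph{and} $E(x,n)\neq 0$ for large $n$ to rule out the two-expansion ambiguity.
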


\begin{proof}
Let the two functions $f, g\in G_3$ be given. First we need to choose the multi-expansion we will work with in the rest of the proof. That is, we need to find a suitable (according to the functions $f$ and $g$) sequence $\{ d_n \}_{n=1}^\infty \subseteq \en$. Using the multi-expansion we will then define a Foran system $\F$ for left-$[f]$-porosity with right-$[g]$-porous elements. According to Lemma \ref{ForanLemma}, any set $F\in\F$ will have the desired properties.

\paragraph{\bf Multi-expansion:} Take the sequence $\{ x_n \}_{n=1}^\infty$ from Lemma \ref{lemmaxn} for $f$; this sequence will remain fixed till the end of the proof. We shall now define the sequence $\{ d_n \}_{n=1}^\infty\subseteq \en$ by induction, respecting the notation from Definition \ref{defmultiexp}. At the same time we also inductively define the multi-expansion $\{ b_n \}_{n=1}^\infty$ by the formula \eqref{defmultirozvojbn}. 

Set $d_1:=\min\{k\in{\mathbb N};\, a_k(x_1)\neq 0\}$. Now suppose we have already chosen the numbers $d_k\in \en$ for all $k\leq n$. Set $k_0:=\max\{k\in{\mathbb N};\, b_n(x_k)\neq 0\}$ and choose a natural number $d_{n+1}$ such that the following conditions hold:

\begin{itemize}
\item $d_{n+1}>d_n$,
\item $g\left(10^{-D_n}\right)>10^{-D_{n+1}}$,
\item $D_{n+1}=\sum_{k=1}^{n+1}d_k > \max\left( \max \left\{ l(x_k);\, b_n(x_k)\neq 0 \right\}, l(x_{k_0+1})\right)$.
\end{itemize}
These conditions can be met, since $x_n\in\aa$ for all $n$ and the sequence tends to zero.

Hence, we have a fixed multi-expansion $\{ b_n \}_{n=1}^\infty$. Observe that each member of the sequence $\{ x_n \}_{n=1}^\infty$ either has only one non-zero multi-digit or it has two consecutive non-zero multi-digits. We will also use the easy fact that, for each $n\in\en$, $\max \{k; b_k(x_{n+1})\neq 0\} \leq \max \{k; b_k(x_n)\neq 0\}+1$.

\paragraph{\bf The sets:} Fix an arbitrary $\alpha\in \left(\frac{1}{2},1 \right)$. For $N\in\en$, $\varepsilon>0$ and multi-digits $B_k\in\{ 0,1,\dots,\linebreak 
10^{d_k}-1\}$, $k\in\{1,\dots,N^2\}$, we define $A(B_1,B_2,\dots,B_{N^2},\varepsilon)$ as the set of all $x\in (0,1)$ which satisfy the following conditions (see Definition~\ref{defmultiexp}):
\begin{enumerate}
\item[\bf(A1)]\qquad $b_1(x)=B_1, b_2(x)=B_2, \dots, b_{N^2}(x)=B_{N^2}$.
\item[\bf(A2)]\qquad $1-\frac{\varepsilon}{n^\alpha}\leq\frac{C(x,n)}{2n+1}<1$ \quad whenever \quad $n\geq N$. 	
\end{enumerate}
We now aim to prove that if
\begin{equation}\label{2rce2}
N>\max\bigl\{ (1+\varepsilon)^{\frac{1}{\alpha}},\varepsilon^{\frac{1}{\alpha-1}} \big\},
\end{equation}
then the set $A(B_1,B_2,\dots,B_{N^2},\varepsilon)$ is nonempty and closed.

\paragraph{\bf The sets are closed:} We will show that each set of the form $A:=A(B_1,\dots,B_{N^2},\varepsilon)$ is closed. To that end, take a convergent sequence $\{ y_m \}_{m=1}^\infty \subseteq A$ with limit $y$. We want to check that $y\in A$.

Condition {\bf(A2)} clearly implies the existence of an $n_0\in\en$ such that, for each $n\geq n_0$, 
\begin{equation}\label{2rce1}
C(x,n)\neq 0 \qquad\text{and}\qquad E(x,n)\neq 0.
\end{equation}
Choose an arbitrary $n\geq n_0$ and find some $m(n)\in\en$ such that
\begin{equation*}
\left| y_{m(n)}-y\right| < 10^{-D}\qquad\text{where}\qquad D:=D_{(n+1)^2+1}.
\end{equation*}
It is now easy to see that \eqref{2rce1} gives $b_k(y_{m(n)}\!)=b_k(y)$ for each $k\leq n^2$. But since the number $n$ was chosen arbitrarily and the corresponding $y_{m(n)}$ is an element of $A$, it follows from the definition of $A$ that $y\in A$.

\paragraph{\bf The sets are nonempty:} Condition {\bf(A2)} from the definition of $A(B_1,\dots,B_{N^2},\varepsilon)$ is equivalent to the statement
\begin{equation*}
C(x,n)\in\Bigl[ \bigl( 1-\frac{\varepsilon}{n^\alpha} 
\bigr)(2n+1),2n+1\Bigr)=:I_n, \quad\text{for each } n\geq N.
\end{equation*}
Assume $n\geq N$; then \eqref{2rce2} gives the following estimates:
\begin{gather}
\left(1-\frac{\varepsilon}{n^\alpha}\right)(2n+1) \stackrel{\eqref{2rce2}}{>} \Bigl(1-\frac{\varepsilon}{1+\varepsilon}\Bigr) \bigl(2(1+\varepsilon)^{\frac{1}{\alpha}}\bigr) \stackrel{(0<\alpha<1)}{>}    2, \label{2rce9}\\
(2n+1)-\left(1-\frac{\varepsilon}{n^\alpha}\right)(2n+1)=\frac{\varepsilon}{n^\alpha}(2n+1)>2\varepsilon n^{1-\alpha} \stackrel{\eqref{2rce2}}{>} 2. \notag
\end{gather}
These two inequalities imply that the interval $I_n$ contains some natural number. Hence, the set $A(B_1,\dots,B_{N^2},\varepsilon)$ is nonemtpy, whenever $N$ fulfills condition \eqref{2rce2}.

\paragraph{\bf Foran system:} Denote by $\F$ the system of all sets of the form $A(B_1,\dots,B_{N^2},\varepsilon)$ with $N$ satisfying \eqref{2rce2}. We already know that $\F$ satisfies the first condition from the definition of Foran system \ref{defForan}. We claim that condition (b) from \ref{defForan} is also true for $\F$.

To prove it, take an arbitrary set $F:=A(B_1,\dots,B_{N^2},\varepsilon)\in \F$ and an open set $G\subseteq \er$ with $F\cap G\neq\emptyset$. Now choose any point $y\in F\cap G$ and a natural number $M$ such that
\begin{gather}\label{2rce4}
M>\max\left\{ N, \bigl( \frac{\varepsilon}{2} \bigr) ^{\frac{1}{\alpha-1}} \right\}\qquad\text{and}\\
\notag
F^*:=A\left(B_1,\dots,B_{N^2},b_{N^2+1}(y),\dots,b_{M^2}(y),\frac{\varepsilon}{2} \right) \subseteq F\cap G.
\end{gather}
It is easy to see that such an $M$, indeed, exists. Hence, we have the set $F^*$ which clearly belongs to $\F$. We now need to show that the set $F$ is left-$[f]$-porous at no point of $F^*$.

Let us fix an arbitrary point $z\in F^*$. As the sequence $\{ x_n \}_{n=1}^\infty$ has the property \emph{(iv)} from Remark \ref{condIV}, it is sufficient to find a $p_0\in\en$ such that, for each $p>p_0$, at least one of the points $z_p^-:=z-x_p$ and $z_{p+1}^-:=z-x_{p+1}$ belongs to $F$. Find a $p_0\in\en$ such that, for each $p>p_0$,
\begin{equation}\label{2rce5}
x_{p}<10^{-D} \qquad\text{where}\qquad D:=D_{(M+1)^2+1}.
\end{equation}
Now choose any $p>p_0$ and $x\in\{ z_p^-, z_{p+1}^- \}$. The estimate \eqref{2rce5} gives that $b_k(x_p)=0$ for all $k\leq (M+1)^2$.	We also know that each $x_n$ has at most two non-zero multi-digits and it follows that, for all $n\in \en$,
\begin{equation}\label{2rce6}
C(x,n)\geq C(z,n)-2.
\end{equation} 
Further, since $z\in F^*$, we have $C(z,M)>0$ (we even know that $C(z,M)>2$) and this yields that, for each $k\leq M^2$, $b_k(x)=b_k(z)$. In particular, $x$ satisfies the condition {\bf(A1)} from the definition of $F=A(B_1,\dots,B_{N^2},\varepsilon)$.

We now turn our attention to condition {\bf(A2)}. The following estimate holds for $n\geq M$:
\begin{equation}\label{2rce8}
\frac{2}{2n+1}=\frac{\varepsilon (\frac{\varepsilon}{2})^{\frac{1-\alpha}{\alpha-1}}}{2n+1} \stackrel{\eqref{2rce4}}{<} \frac{\varepsilon n^{1-\alpha}}{2n+1} < \frac{\varepsilon}{2n^\alpha}
\end{equation}
Thus, for $n\geq M$, we obtain
\begin{equation*}
\frac{C(x,n)}{2n+1} \stackrel{\eqref{2rce6}}{\geq} \frac{C(z,n)-2}{2n+1} \stackrel{(z\in F^*)}{\geq} 1-\frac{\varepsilon}{2n^\alpha}-\frac{2}{2n+1} \stackrel{\eqref{2rce8}}{>} 1-\frac{\varepsilon}{n^\alpha}.
\end{equation*}
It remains to be shown that, for $x=z^-_p$ or $x=z^-_{p+1}$,
\begin{equation*}
E(x,n)\neq 0 \qquad\text{for each }\; n\geq M.
\end{equation*}
To that end, assume that, for some $n\geq M$, $E(z^-_p,n)=0$. Set $k_1:=\max\{m\in\mathbb N;\, b_m(x_p)\neq 0   \}$ and $k_2:=\max\{m\in\mathbb N;\, b_m(x_{p+1})\neq 0   \}$. The way the multi-expansion was constructed implies (as we have observed) that $k_2\in \{ k_1, k_1+1\}$. We also noted that $b_m(x_p)=0$ for all $m\in \en \setminus \{k_1, k_1-1\}$. Taking into account that for each $m\geq M$ we have $C(z,m)>2$, it is easy to see that 
\begin{align}\label{ntyblok}
k_1 =n^2+i \qquad & \text{where }\; i\in\{1,\dots,2n-1\}, \\ \label{ntyblok2}
b_m(z) =10^{d_m}-1 \qquad & \text{for each }\; m\in\{n^2+i+1,\dots,(n+1)^2\}.
\end{align}
We distinguish two cases. In case $k_2=k_1$, we use the property \emph{(ii)} of the sequence $\{ x_n \}_{n=1}^\infty$ which gives that $l(x_{p+1})>l(x_p)$. It follows that $b_{k_1}(x_{p+1})\neq b_{k_1}(x_p)$, and thus $b_{k_1}(z^-_{p+1})\neq b_{k_1}(z^-_p)=10^{d_{k_1}-1}$ which means that $E(z^-_{p+1})\neq 0$. If, on the other hand, $k_2=k_1+1$, then from \eqref{ntyblok} and \eqref{ntyblok2} it follows that $b_{k_1+1}(z^-_{p+1})\neq 10^{d_{k_1+1}-1}$. Again, the conclusion is that $E(z^-_{p+1})\neq 0$. Hence, $\F$ is a Foran system for left-$[f]$-porosity.

\paragraph{\bf Right-$[g]$-porosity:} To conclude the proof of the theorem we shall choose any set $F:=A(B_1,\dots,B_{N^2},\varepsilon) \in \F$ and prove it is right-$[g]$-porous. 

Fix a point $x\in F$. For each $n\in N$, let $m_n$ be the maximum of those $i\in \en$ for which there exist natural numbers $u$ and $v$ such that $v-u=i$,
\begin{equation}\label{2rce14}
n^2\leq u<v \leq(n+1)^2\qquad\text{and}\qquad b_s(x)=10^{d_s}-1 \;\text{ for each } u<s\leq v.
\end{equation}
The following estimate is obvious:
\begin{equation}\label{2rce11}
2n+1-E(x,n)=C(x,n)\leq m	_n(E(x,n)+1)
\end{equation}
Moreover, condition {\bf (A2)} from the definition of $F$ implies that, for each $n\geq N$,
\begin{equation}\label{2rce12}
E(x,n)\leq\frac{\varepsilon(2n+1)}{n^\alpha}.
\end{equation}
Hence, for all $n\geq N$ the following estimate holds:
\begin{gather}
m_n  \stackrel{\eqref{2rce11}}{ \geq}  \frac{2n+1-E(x,n)}{E(x,n)+1} \stackrel{\eqref{2rce12}}{\geq} \frac{(2n+1)(1-\frac{\varepsilon}{n^\alpha})}{\frac{\varepsilon(2n+1)+n^\alpha}{n^\alpha}} \stackrel{\eqref{2rce2}}{\geq} \notag \\
 \stackrel{\eqref{2rce2}}{ \geq}  \frac{n^\alpha(2n+1)\frac{1}{1+\varepsilon}}{\varepsilon(2n+1)+(2n+1)} = \frac{n^\alpha \frac{1}{1+\varepsilon}}{1+\varepsilon}=\frac{n^\alpha}{(1+\varepsilon)^2}=:cn^\alpha. \label{2rce13}
\end{gather}
Now, for each $n$, choose natural numbers $u_n$ and $v_n$ such that
\begin{equation*}
v_n-u_n=m_n \;\text{ and \eqref{2rce14} holds for }\; u=u_n, \; v=v_n.
\end{equation*}
Set $L_n:=D_{v_n-1}$, $K_n:=D_{v_n}$, and define
\begin{equation}\label{2rce17}
y_n:=x+10^{-K_n} \qquad\text{and}\qquad z_n:=x+10^{-L_n}.
\end{equation}
For each $t\in \left( 10^{-K_n}, 10^{-L_n} \right)$, $\min \{k\in{\mathbb N};\, b_k(t)\neq 0\} = v_n$. It is now easy to see that, for each $t\in (y_n, z_n)$ and each $u_n<k \leq v_n-1$, we have $b_k(t)=0$. Hence, for each $t\in (y_n, z_n)$,
\begin{equation}\label{2rce15}
C(t,n)\leq 2n+1-(m_n-1).	
\end{equation}
Since $\alpha>1-\alpha$, we can find an $n_0\in N$ such that, for each natural $n>n_0$,
\begin{equation}\label{2rce16}
2n+2-cn^\alpha<2n+1-2\varepsilon n^{1-\alpha}-\varepsilon n^{-\alpha}=\left(1-\frac{\varepsilon}{n^\alpha}\right)(2n+1).
\end{equation}
Now, for each $n>n_0$, we obtain the estimate
\begin{equation*}	
C(t,n)\stackrel{\eqref{2rce15}}{\leq} 2n+1-(m_n-1) \stackrel{\eqref{2rce13}}{\leq} 2n+2-cn^\alpha \stackrel{\eqref{2rce16}}{<}\left(1-\frac{\varepsilon}{n^\alpha}\right)(2n+1),
\end{equation*}
which implies that $t\notin F=A(B_1,\dots,B_{N^2},\varepsilon,\alpha)$. Thus, for each $n>n_0$, $(y_n,z_n)\cap F=\emptyset$.

It only remains to recall the second condition from the construction of our multi-expansion. It says that
\begin{equation*}
g(z_n-x)=g(10^{-L_n})>10^{-K_n}=y_n-x,
\end{equation*}
and hence we obtain the inclusion $(y_n,z_n)\supseteq (x+g(z_n-x),z_n)$. It follows that the set $F$ is right-$[g]$-porous which concludes the proof.
\end{proof}

\section{Several Definitions of Porosity and Their Relations}
In this last section we briefly discuss the connection between the notion of porosity used in the proof of Theorem~\ref{hlavniveta} and two of the more standard definitions of porosity controlled by a function.

\begin{notation}\label{ZMFci}
The different notions of porosity use the following sets of control functions:
\begin{itemize}
\item $G$ is the set of all increasing continuous functions $f:[0,\delta)\to (0,\infty)$ (where $\delta>0$) with $f(0)=0$.
\item $G_1$ is the set of all $f\in G$ such that $\lim_{x\to 0_+}\frac{f(x)}{x}>0$.
\item $G_2$ is the set of all $f\in G$ such that $f(x)>x$ for each $x\neq 0$ from the domain of $f$.
\item $G_3$ is the set of all $f\in G$ such that $f(x)<x$ for each $x\neq 0$ from the domain of $f$.
\end{itemize}
\end{notation}

\begin{defn}[$(g)$-porosity]\label{defkulata}
Let $M\subseteq \er$ be a set and let $I\subseteq \er$ be an interval. We denote by $\lambda (M,I)$ the length of the largest open subinterval of $I$ which is disjoint from $M$. For $x\in \er$ and $g\in G_1$ we set
\begin{gather*}
p^+_g(M,x):=\limsup_{h\to 0_+}\frac{g(\lambda(M,(x,x+h)))}{h}, \\
p^-_g(M,x):=\limsup_{h\to 0_+}\frac{g(\lambda(M,(x-h,x)))}{h}.
\end{gather*}
We say $M$ is \emph{right-$(g)$-porous at $x$} if $p_g^+(M,x)>0$.
\end{defn}

\begin{defn}[$\langle g \rangle$-porosity]\label{defangle}
Let $M\subseteq \er$ be a set, $r,\delta \in \er$ be such that $0<r<\delta$ and let $g\in G_2$ be defined on $[0,\delta)$. Denote
\begin{equation*}
S^+(g,r,M):=\bigcup \bigl\{ (y-g(\sigma),y); y\in{\mathbb R},\sigma\in (0,r), (y-\sigma,y)\cap M=\emptyset \bigr\}.
\end{equation*}
We say the set $M$ is \emph{right-$\langle g \rangle$-porous at $x$} if 
\begin{equation*}
x\in \bigcap_{0<r<\delta}S^+(g,r,M).
\end{equation*}
\end{defn}

\begin{defn}[${[}g{]}$-porosity]\label{defhrana}
Let $M\subseteq \er$, $x\in\er$ and $g\in G_3$. We say the set $M$ is \emph{right-$[g]$-porous at $x$} if there exists a sequence $\{ \alpha_n \} _{n=1}^\infty \subseteq\er$ such that the following conditions are satisfied:
\begin{enumerate}[(i)]
\item The sequence $\{ \alpha_n \} _{n=1}^\infty$ is decreasing and it tends to $0$.
\item For each $n\in \en$ we have $(x+g(\alpha_n),x+\alpha_n)\cap M=\emptyset$.
\end{enumerate}
\end{defn}

Of course, definitions \ref{defkulata}, \ref{defangle}, and \ref{defhrana} can be stated ``for the left side'' in the obvious symmetrical way.

\begin{defn}
Let $M\subseteq \er$, let $x\in \er$, and assume $V$ is one of the symbols $(g)$, $\langle g \rangle$, $[g]$. We say the set $M$ is
\begin{itemize}
\item \emph{$V$-porous at $x$} if it is left-$V$-porous at $x$ or right-$V$-porous at $x$,
\item \emph{$V$-porous} if it is $V$-porous at each of its points,
\item \emph{right-$V$-porous} if it is right-$V$-porous at each of its points,
\item \emph{$\sigma$-$V$-porous} if it is a countable union of $V$-porous sets,
\item \emph{$\sigma$-right-$V$-porous} if it is a countable union of right-$V$-porous sets.
\end{itemize}
\end{defn}

The following proposition deals (to some extent) with the relation of the previously defined notions of porosity. In order to formulate it in a briefer form, we need a special notation:

\begin{notation}
\mbox{}
\begin{itemize}
\item If $g\in G_1$, we write $^1\! g$ instead of $(g)$.
\item If $g\in G_2$, we write $^2\! g$ instead of $\langle g \rangle$.
\item If $g\in G_3$, we write $^3\! g$ instead of $[g]$.
\end{itemize}
\end{notation}

\begin{prop}\label{porekviv}
Let $i,j\in \{ 1,2,3 \}$ and let $f\in G_i$. Then:
\begin{enumerate}[(i)]
\item If $j\neq 1$, then there is a function $g\in G_j$ such that whenever $M\subseteq \er$ is right-$\,^j\! g$-porous (resp. left-$\,^j\! g$-porous), then $M$ is right-$\,^i \! f$-porous (resp. left-$\,^i \! f$-porous).
\item There exists a function $h\in G_j$ such that whenever $M\subseteq \er$ is right-$\, ^i\! f$-porous (resp. left-$\,^i \! f$-porous), then $M$ is right-$\, ^j \! h$-porous (resp. left-$\,^j\! h	$-porous).
\end{enumerate}
\end{prop}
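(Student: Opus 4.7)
The plan is to treat the proposition one ordered pair $(i,j)$ at a time. Whenever $i=j$ the claim is trivial: take $g := f$ in part (i) or $h := f$ in part (ii). This leaves ten off-diagonal sub-cases. In each of them the approach is the same: unpack the three definitions, identify the geometric content of each porosity notion as ``for arbitrarily small scales there is a gap in $M$ near $x$ with a prescribed relation between its length and its distance from $x$'', and construct the target control function explicitly so that this relation holds.

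For part (i), the proposed choices are as follows. For $(i,j)=(1,3)$, take $g(\alpha) := \alpha/2 \in G_3$: a $[g]$-gap of length $\alpha/2$ lies in $(x,x+\alpha)$, so $f(\alpha/2)/\alpha \to \tfrac{1}{2}\lim_{t\to 0^+} f(t)/t > 0$ since $f \in G_1$, yielding $(f)$-porosity. For $(i,j)=(1,2)$, take $g(\sigma) := 2\sigma \in G_2$ and split on whether $y-x \le \sigma$ (so that $(x,y)$ is itself a gap, giving $f(y-x)/(y-x) \to \lim f(t)/t > 0$) or $y-x > \sigma$ (so that the full gap of length $\sigma$ fits in the window, giving $f(\sigma)/(y-x) \ge \tfrac{1}{2}\,f(\sigma)/\sigma$). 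For $(i,j)=(3,2)$, given $f \in G_3$, pick $g \in G_2$ satisfying $g(\alpha - f(\alpha)) \le \alpha$ for all small $\alpha$: with $\alpha := y-x$, the $\langle g\rangle$-gap $(y-\sigma,y)$ then contains the sub-interval $(x+f(\alpha),x+\alpha)$, which is exactly the gap required by $[f]$-porosity. For $(i,j)=(2,3)$, pick $g \in G_3$ with $g(\alpha) \le \alpha - f^{-1}(\alpha)$ so that $f(\alpha - g(\alpha)) \ge \alpha$, and the symmetric argument shows that $[g]$-porosity yields $\langle f\rangle$-porosity.

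Part (ii) is handled analogously, with hypothesis and conclusion swapped: in each off-diagonal case I would derive the single inequality the gap supplied by $^i\! f$-porosity must satisfy in order to witness $^j\! h$-porosity, and choose $h$ accordingly. For $(i,j)=(3,1)$, for instance, a $[f]$-porosity gap has length $\alpha_n - f(\alpha_n)$ in the window $(x,x+\alpha_n)$, and any $h \in G_1$ with $h(\alpha - f(\alpha)) \ge c\alpha$ for small $\alpha$ and some fixed $c > 0$ witnesses $(h)$-porosity along the sequence $\{\alpha_n\}$.

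The main obstacle is technical. The natural candidate functions — $\alpha - f(\alpha)$, $\alpha - f^{-1}(\alpha)$, and their set-theoretic inverses — need not be monotonic, since only $f$ (not $1-f'$) is assumed so. Moreover, in the sub-cases of part (ii) with $j=1$, there is an extra delicate point: membership in $G_1$ requires $\lim_{t\to 0^+} h(t)/t$ to actually exist, which the candidate built directly from a non-regular $f$ need not satisfy. Both issues are handled by standard regularization: passing to a monotone envelope $\bar\phi(\alpha) := \sup_{s\le\alpha}\phi(s)$ (or its lower counterpart), adding a small strictly increasing term to secure strict monotonicity, and smoothing where necessary so that the relevant limit exists. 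The resulting weakening of the key inequality by a bounded factor is absorbed by the $\limsup$ appearing in all three porosity definitions, so no essential estimate is lost.
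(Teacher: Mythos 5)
The paper gives no proof of Proposition~\ref{porekviv}; the author explicitly states ``We omit the proof as it is completely straightforward but quite long.'' So there is nothing in the text to compare your argument against, and I can only assess your proposal on its own terms.

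Your strategy is the natural one, and the explicit choices you write down are in the right direction. In particular, the reduction to ten off-diagonal pairs, the choices $g(\alpha)=\alpha/2$ and $g(\sigma)=2\sigma$ for $i=1$, and the target inequalities $g(\alpha-f(\alpha))\le\alpha$ and $g(\alpha)\le\alpha-f^{-1}(\alpha)$ (and, for part~(ii), $h(\alpha-f(\alpha))\ge c\alpha$) are exactly what unwinding the three definitions produces, once one plugs in $y=x+\alpha$, $\sigma=\alpha-g(\alpha)$ and the like. You also correctly single out the real obstacle: the auxiliary functions $\alpha\mapsto\alpha-f(\alpha)$, $\alpha\mapsto\alpha-f^{-1}(\alpha)$ and their generalized inverses are continuous, positive and vanish at $0$, but they need not be monotone, so a membership $g\in G_j$ is not automatic. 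This is precisely the point where the paper's own Lemma~\ref{lemmaxn} resorts to the monotone envelope $\beta(x)=\min\alpha([x,\delta/2])$, and the same device is what you would need here.

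That said, the regularization step is exactly the ``quite long'' part of the proof, and you declare it handled by ``standard regularization'' without carrying it out; so the one genuinely nontrivial component of the argument is left as a claim. Two smaller inaccuracies are worth flagging. First, your closing remark that the ``weakening of the key inequality by a bounded factor is absorbed by the $\limsup$ appearing in all three porosity definitions'' is not literally correct: a $\limsup$ appears only in Definition~\ref{defkulata} (the $(g)$-case); Definitions~\ref{defangle} and~\ref{defhrana} are existential statements with no $\limsup$. A bounded loss can still be absorbed, but it has to be absorbed into the control function $g$ or $h$ itself, which feeds back into the very regularization step you are trying to brush aside. Second, in the case $(i,j)=(1,2)$ the split on ``$y-x\le\sigma$'' versus ``$y-x>\sigma$'' is slightly off: for $x\in M$ one always has $y-x\ge\sigma$, so the first branch is the boundary case $y-x=\sigma$; the computation still goes through, but the exposition should reflect this. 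Finally, for the sub-cases of part~(ii) with $j=1$ you should be explicit about whether ``$\lim_{x\to0^+}f(x)/x>0$'' in the definition of $G_1$ permits the value $+\infty$; if it does not, the claim is simply false for $f\in G_3$ with, say, $f(\alpha)=\alpha-\alpha^2$, since then no $h$ with a finite positive limit of $h(t)/t$ can satisfy $h(\alpha-f(\alpha))\ge c\alpha$. Under the (intended) reading that $+\infty$ is allowed, your plan works, but then securing the existence of the limit (as opposed to a $\liminf$) requires yet another layer of care that the proposal does not address.
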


We omit the proof as it is completely straightforward but quite long. The sole purpose of this proposition in the article is to show that Theorem \ref{hlavniveta}, which is formulated with the non-standard $[f]$-porosity, is also true with other kinds of porosity controlled by a function. (In fact, to see this, we only need to consider the case $j=3$.)

\begin{remark}
The part \emph{(i)} of Proposition \ref{porekviv} says that, for any given type of porosity controlled by a given function, a function $g\in G_j$ can be found such that the notion of right-$\, ^j \! g$-porosity ``is stronger''. We can not include the case $j=1$, because the $^1 \! g$-porosity is the strongest possible for $g(x)=x$, and that is the ordinary upper porosity. For the other two cases, however, we can obtain (in some sense) arbitrarily strong notions of porosity.
\end{remark}

\begin{ack}
I would like to thank Prof. Lud\v{e}k Zaj\'i\v{c}ek for suggesting the topic of this article and several useful remarks.
\end{ack}

\end{document}